\newtheorem{thm}{Theorem}
\newtheorem{lem}{Lemma}
\title{On the Precise Asymptotics of $ex(n,n,n,K_{2,t})$ for even $t$}
\author{Zilin Luo \\ wolfsonluo@gmail.com}
\date{}
\begin{document}
	\maketitle
	\begin{abstract}
		Let $K_{2,t}$ denote the complete bipartite graph. For an integer $n\ge 1$, let $ex(n,n,n,K_{2,t})$
		be the maximum number of edges in an $n\times n\times n$ tripartite graph (that is, a 3-partite graph with three parts each of size $n$) containing no copy of $K_{2,t}$. In this paper we prove that, for even $t\ge 2$,
		$$
		ex(n,n,n,K_{2,t}) \ge \frac{3\sqrt{t-1}}{\sqrt{2}}\, n^{3/2} + o(n^{3/2}).
		$$
		Combining our construction with earlier work of Tait and Timmons, we obtain
		$$
		\lim\limits_{n\to\infty} \frac{ex(n,n,n,K_{2,t})}{n^{3/2}} = \frac{3\sqrt{t-1}}{\sqrt{2}}, \qquad\text{for integer } t\ge 2.
		$$
		\medskip
		\noindent\textbf{Keywords:} extremal number, 3-partite graph, complete bipartite graph.
	\end{abstract}
	\section{Introduction}
	
	For a graph $F$, the Turán number $ex(N,F)$ denotes the maximum number of edges in an $N$-vertex graph that does not contain $F$ as a subgraph. Determining $ex(N,F)$ is a fundamental problem in extremal graph theory. Classical results include Mantel's theorem \cite{M} (the case $F=K_3$) and Turán's theorem for complete graphs\cite{T}; both give exact values of $ex(N,F)$ for all $N$ in their respective settings. However, for a general graph $F$ the exact determination of $ex(N,F)$ is often difficult, and researchers thus frequently investigate its asymptotic behaviour.
	
	The Erd\H{o}s–Stone–Simonovits theorem determines the leading asymptotic term of $ex(N,F)$ in terms of the chromatic number $\chi(F)$ \cite{ES,ES2}
	\[
	ex(N,F) = \left(1 - \frac{1}{\chi(F)-1} + o(1)\right)\binom{N}{2}.
	\]
	This result effectively resolves the problem for non-bipartite graphs, but it yields only the weak bound $ex(N,F)=o(N^2)$ when $F$ is bipartite (since then $\chi(F)=2$). For bipartite $F$ much sharper asymptotics have been established in special cases. For example, Brown \cite{Brown} and, independently, Erd\H{o}s--Rényi--Sós\cite{ER,ERS} showed that for certain values of $N$,
	\[
	ex(N,C_4)=\tfrac12 N^{3/2}+o(N^{3/2}).
	\]
	Using finite-geometric constructions and building on the K\H{o}vári--Sós--Turán bound, F\"{u}redi \cite{Furedi} proved that for every integer $t\ge 1$,
	\[
	ex(N,K_{2,t+1}) = \frac{\sqrt{t}}{2}\, N^{3/2} + o(N^{3/2}).
	\]
	
	It is also natural to study bipartite Turán numbers with prescribed part sizes. For a bipartite graph $F$, let $ex(n,m,F)$ denote the maximum number of edges in an $F$-free bipartite graph with parts of sizes $n$ and $m$. In particular, the works in \cite{Furedi,KST} imply that
	\[
	ex(n,n,K_{2,t+1}) = \sqrt{t}\, n^{3/2} + o(n^{3/2}).
	\]
	For further results on bipartite Turán numbers the reader may consult \cite{3,4,5,6,7,8}.
	
	Recently, Tait and Timmons \cite{TT} studied extremal problems under chromatic constraints. Writing $ex_{\chi\le k}(N,F)$ for the maximum number of edges in an $F$-free graph on $N$ vertices with chromatic number at most $k$, they proved the upper bound
	\[
	ex_{\chi\le 3}(N,K_{s,t}) \le \left(\frac{1}{3}\right)^{1-1/s}\left(\frac{t-1}{2} + o(1)\right)^{1/s} N^{2-1/s},
	\]
	and showed this bound to be asymptotically tight in certain cases. In particular, when $s=2$ and $t$ is odd they obtain
	\[
	ex_{\chi\le 3}(N,K_{2,2r+1}) = \sqrt{\frac{r}{3}}\, N^{3/2} + o(N^{3/2}).
	\]
	Lv et al.\cite{LL} treated the case $s=2$, $t=2$ and proved
	\[
	ex_{\chi\le 3}(N,K_{2,2}) = \frac{1}{\sqrt{6}}\, N^{3/2} + o(N^{3/2}).
	\]
	
	In this paper we extend the construction and methods of Lv et al.\ to handle the remaining case $s=2$ with even $t$. Our main result establishes the asymptotic value of $ex_{\chi\le 3}(N,K_{2,t})$ (equivalently, of $ex(n,n,n,K_{2,t})$ after the appropriate change of parameters) for integer $t\ge 2$; see Theorem~\ref{thm:main} below for a precise statement.
	\section{Results}

	By definition we have
	\[
	ex(n,n,n,F) \le ex(3n,F).
	\]
	
	The following theorem gives an upper bound for $ex(n,n,n,F)$. The proof is essentially the same as the proof of Theorem~3 in \cite{LL}; for completeness we include a proof here.
	
	\begin{thm}\label{thm:upperbound}
		For every integer $n\ge 1$,
		\[
		ex(n,n,n,K_{2,t}) \leq \frac{3}{2}\, n\Big(1+\sqrt{2(t-1)(n-1)+1}\Big).
		\]
	\end{thm}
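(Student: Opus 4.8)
The plan is to use a double-counting (\emph{cherry counting}) argument of K\H{o}v\'ari--S\'os--Tur\'an type, adapted to the tripartite structure so as to sharpen the constant inside the square root from the naive $3n-1$ down to $n-1$. Write $V_1,V_2,V_3$ for the three parts, let $p(v)$ denote the part containing a vertex $v$, and for $i\ne p(v)$ write $d_i(v)=|N(v)\cap V_i|$ for the number of neighbours of $v$ in $V_i$; let $e$ be the number of edges. The only consequence of $K_{2,t}$-freeness I use is that any two distinct vertices have at most $t-1$ common neighbours.

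The central object is the count $M$ of \emph{monochromatic cherries}, by which I mean paths $x-v-y$ whose endpoints $x,y$ lie in a common part $V_i$ while the centre $v$ lies in a different part. I would estimate $M$ in two ways. For the upper bound, a monochromatic cherry is determined by its unordered endpoint pair $\{x,y\}$ (lying inside a single part) together with a common neighbour $v$ of $x$ and $y$; since $x$ and $y$ share a part, every common neighbour automatically lies in a different part, so the number of cherries on a fixed same-part pair equals its number of common neighbours. Summing over the $3\binom{n}{2}$ same-part pairs and applying the bound of $t-1$ common neighbours per pair gives $M\le 3(t-1)\binom{n}{2}$.

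For the lower bound I would count $M$ by its centre: for fixed $v$ and fixed target part $V_i$ with $i\ne p(v)$, the monochromatic cherries centred at $v$ with endpoints in $V_i$ number $\binom{d_i(v)}{2}$, so $M=\sum_{v}\sum_{i\ne p(v)}\binom{d_i(v)}{2}$. There are exactly $6n$ terms in this sum (each of the $3n$ vertices contributes two terms, one per foreign part), and their arguments satisfy $\sum_{v}\sum_{i\ne p(v)}d_i(v)=2e$, since each edge is counted once from each of its endpoints. Convexity of $x\mapsto\binom{x}{2}$ (Jensen's inequality) then yields $M\ge 6n\binom{2e/(6n)}{2}=6n\binom{e/(3n)}{2}$.

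Combining the two estimates and setting $\delta=e/(3n)$ gives $3n\,\delta(\delta-1)\le\tfrac{3}{2}(t-1)n(n-1)$, i.e.\ $\delta^2-\delta-\tfrac12(t-1)(n-1)\le 0$; solving this quadratic for the positive root bounds $\delta$ above by $\tfrac12\bigl(1+\sqrt{2(t-1)(n-1)+1}\bigr)$, whence $e=3n\delta$ yields the claimed bound. I expect the only care needed to be in the lower-bound step: one must verify that the collection of $\binom{d_i(v)}{2}$ terms has exactly $6n$ summands with total argument $2e$, so that the global application of Jensen is valid with no symmetry assumption on the edge distribution between parts. The decisive idea --- and the reason the tripartite bound beats three times the bipartite one --- is the restriction to monochromatic cherries, which caps the relevant pair count at $3\binom{n}{2}$ rather than $\binom{3n}{2}$.
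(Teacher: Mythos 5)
Your proof is correct and is essentially the paper's argument: both count cherries whose two endpoints lie in a common part, bound these using the fact that each same-part pair has at most $t-1$ common neighbours, and apply convexity of $x\mapsto\binom{x}{2}$ before solving the resulting quadratic in $e$. The only (immaterial) difference is organizational --- the paper applies Jensen separately to the $2n$ degrees into each fixed part, obtaining three bounds of the form $m_i+m_j\le n\bigl(1+\sqrt{2(t-1)(n-1)+1}\bigr)$ which it then sums and halves, whereas you apply Jensen once globally to all $6n$ degree terms; both routes yield the identical final bound.
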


		\begin{proof}
			Let \(G\) be a 3-partite \(K_{2,t}\)-free graph with vertex classes \(V_1\cup V_2\cup V_3\), where \(|V_i|=n\) for \(i=1,2,3\). For \(1\le i\le 3\) (indices taken modulo \(3\)) define
			$m_{i+2} = E(V_i\cup V_{i+1})$. Assume vertices of $V_i$ are $v_1^i, v_2^i, \cdots, v_n^i$. Define
			$
			x_i=|N(v_i^1)\cap V_3|,\qquad y_i=|N(v_i^2)\cap V_3|,\quad 1\le i\le n,
			$
			so that \(m_2=\sum_{i=1}^n x_i\) and \(m_1=\sum_{i=1}^n y_i\).
			
			Since \(G\) is \(K_{2,t}\)-free, any two vertices in \(V_3\) have at most \(t-1\) common neighbours in  \(V_1\cup V_2\). Hence
			\[
			\sum_{i=1}^n\binom{x_i}{2}+\sum_{i=1}^n\binom{y_i}{2}\le (t-1)\binom{n}{2}.
			\]
			By convexity of the function \(\binom{x}{2}=\tfrac{x(x-1)}{2}\),
			\[
			2n\binom{\frac{m_1+m_2}{2n}}{2}
			\le \sum_{i=1}^n\binom{x_i}{2}+\sum_{i=1}^n\binom{y_i}{2}
			\le (t-1)\binom{n}{2}.
			\]
			Let \(s=m_1+m_2\). The above inequality leads to
			\[
			s^2-2ns-2n^2(t-1)(n-1)\le0,
			\]
			then
			\[
			s\le n\Bigl(1+\sqrt{2(t-1)(n-1)+1}\Bigr).
			\]
			By the same argument we obtain the identical upper bound for \(m_1+m_3\) and \(m_2+m_3\). Summing the three bounds and dividing by \(2\) gives
			\[
			e(G)=m_1+m_2+m_3
			\le \tfrac{3}{2}n\Bigl(1+\sqrt{2(t-1)(n-1)+1}\Bigr).
			\]
			Since this holds for every 3-partite \(K_{2,t}\)-free graph with parts of size \(n\), we conclude
			\[
			\operatorname{ex}(n,n,n,K_{2,t})\le \tfrac{3}{2}n\Bigl(1+\sqrt{2(t-1)(n-1)+1}\Bigr),
			\]
			completing the proof.
		\end{proof}

	Next we present a construction that yields a lower bound for $ex(n,n,n,K_{2,t})$. The case of odd $t$ was treated in \cite{TT}; below we consider the remaining case when $t$ is even. The construction below is inspired by the algebraic constructions of F\"{u}redi and of Lv et~al.
	
	Let $p$ be a prime with $(t-1)\mid (p-1)$, and write
	\[
	p-1 = a(t-1),
	\]
	then $a$ is even for even $t$. Define
	\[
	n_t=\frac{p(p-1)}{2(t-1)}=\frac{a}{2}\,p.
	\]
	Denote by $\mathbb{F}_p$ the finite field with $p$ elements, and by $\mathbb{F}_p^*$ its multiplicative group. Fix a primitive root $g$ generating $\mathbb{F}_p^*$.  Set
	\[
	B=\{g,g^2,\dots,g^{a/2}\},\qquad W=\{a,2a,\dots,(t-1)a\}.
	\]
	To prove the correctness of the construction we need the following lemma.
	
	\begin{lem}\label{lemma:unique}
		Fix any $f\in\mathbb{F}_p^*$. Define
		\[
		P_f=\{(w,b)\in W\times B \mid g^w b = f\},
		\qquad
		N_f=\{(w,b)\in W\times B \mid g^w b = -f\}.
		\]
		Then for every $f$ we have
		\[
		\{|P_f|,|N_f|\} = \{0,1\},
		\]
		this yields that each of $P_f$ and $N_f$ contains at most one element.
	\end{lem}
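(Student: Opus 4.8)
The plan is to translate everything into exponents of the primitive root $g$ and reduce the claim to a statement about a subset of $\mathbb{Z}/(p-1)$. Since every element of $B$ has the form $g^j$ with $1\le j\le a/2$, and every $w\in W$ has the form $ia$ with $1\le i\le t-1$, each product $g^w b$ equals $g^{ia+j}$. First I would record that as $(i,j)$ ranges over $\{1,\dots,t-1\}\times\{1,\dots,a/2\}$ the integer $ia+j$ takes $(t-1)(a/2)=(p-1)/2$ distinct values lying in an interval of length strictly less than $p-1$; hence these exponents are pairwise distinct modulo $p-1$ and the products $g^w b$ are pairwise distinct. This already yields $|P_f|\le 1$ and $|N_f|\le 1$ for every $f$, which is the ``at most one'' conclusion.

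To upgrade this to the full set equality $\{|P_f|,|N_f|\}=\{0,1\}$ it suffices to show that the set of products $S=\{g^w b:(w,b)\in W\times B\}$ and its negative $-S$ partition $\mathbb{F}_p^*$: then for each $f$ exactly one of $f,-f$ lies in $S$, so exactly one of $|P_f|,|N_f|$ equals $1$ and the other $0$. Writing $E=\{ia+j:1\le i\le t-1,\ 1\le j\le a/2\}\subseteq\mathbb{Z}/(p-1)$ for the exponent set and using $-1=g^{(p-1)/2}$, this partition is equivalent to showing that $E$ and $E+q$ are disjoint in $\mathbb{Z}/(p-1)$, where $q=(p-1)/2$; since $|E|=q$, disjointness forces $E\sqcup(E+q)=\mathbb{Z}/(p-1)$ automatically.

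The combinatorial heart of the argument --- and the step I expect to be the main obstacle --- is the disjointness of $E$ and $E+q$. Here I would tile $\mathbb{Z}/(p-1)$ by the $t-1$ consecutive intervals $I_k=[ka+1,(k+1)a]$ of length $a$, and observe (after reducing the block $i=t-1$ modulo $p-1$) that $E$ consists exactly of the ``first halves'' $[ka+1,ka+a/2]$ of these intervals. Writing $a=2c$ so that $q=(p-1)/2=c(t-1)$, a direct computation shifts the block $[2ck+1,2ck+c]$ by $q$ to $[c(2k+t-1)+1,\,c(2k+t)]$. The point is that $t$ is even, so $t-1$ is odd and $2k+t-1$ is odd; writing $2k+t-1=2m+1$ turns the shifted block into $[2cm+c+1,\,2c(m+1)]$, i.e.\ the ``second half'' of $I_m$. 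Thus the shift by $q$ carries first halves to second halves of the same tiling, so $E$ (all first halves) and $E+q$ (all second halves) are disjoint and together exhaust $\mathbb{Z}/(p-1)$. This is precisely where the hypothesis that $t$ is even is indispensable: for odd $t$ the quantity $2k+t-1$ would be even and the half-shift alignment would fail. Translating back gives $S\sqcup(-S)=\mathbb{F}_p^*$ and hence the lemma.
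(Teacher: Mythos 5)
Your proof is correct and follows essentially the same route as the paper's: both pass to exponents of the primitive root $g$, decompose each exponent as a multiple of $a$ plus a residue in $[1,a]$, and use $-1=g^{(t-1)a/2}$ together with the evenness of $t$ to see that negation carries the first half of each length-$a$ block to a second half. The only difference is packaging: the paper exhibits the unique element of $P_f$ or $N_f$ explicitly via a case split on the representation $f=g^{ca+d}$, whereas you obtain existence non-constructively from the counting identity $|S|=(p-1)/2$ combined with the disjointness of $S$ and $-S$.
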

	
	\begin{proof}
		Since $g$ is a primitive root modulo $p$ we have $g^{(p-1)/2}\equiv -1\pmod p$. Using $p-1=a(t-1)$ this implies
		\[
		g^{\frac{(t-1)a}{2}}=-1,\qquad \text{in }\mathbb{F}_p.
		\]
		
		Write $f$ uniquely in the form
		$
		f = g^{c a + d},
		$
		where $c\in\{1,2,\dots,t-1\}$ and $d\in\{1,2,\dots,a\}$. Note that any element of $W$ has the form $w=ja$ for some $j\in\{1,2,\dots,t-1\}$, and any element of $B$ is $g^e$ for some $e\in\{1,\dots,a/2\}$.
		
		First suppose $1\le d\le a/2$. Taking $w=ca\in W$ and $b=g^d\in B$ we get $g^w b = g^{ca+d}=f$, hence $(ca,g^d)\in P_f$. If there were another pair $(w',b')\in P_f$ with $w'=j'a$ and $b'=g^{e'}$, then
		\[
		j'a + e' \equiv ca + d \pmod{a(t-1)},
		\]
		and reducing both sides modulo $a$ gives $e'\equiv d\pmod a$. Since $e'\in\{1,\dots,a/2\}$ and $d\in\{1,\dots,a/2\}$, we must have $e'=d$, and consequently $j'=c$. Thus $(ca,g^d)$ is the unique element of $P_f$, and $N_f=\varnothing$ in this case.
		
		Next suppose $a/2 +1 \leq  d \le a$, Using $g^{\frac{(t-1)a}{2}}=-1$ we obtain
		\[
		-f = g^{\frac{(t-1)a}{2}} g^{ca+d} = g^{\big(c+\tfrac{t}{2}\big)a + d - \tfrac{a}{2}}.
		\]
		Similarly argument shows that $P_f = 0$ and, $N_f = \{(ca+\frac{ta}{2}, g^{d-\frac{a}{2}})\}$ if $c\leq \frac{t}{2}-1$ and $N_f = \{(ca-\frac{ta}{2} + a, g^{d-\frac{a}{2}})\}$ if $c\geq \frac{t}{2}$. 
		
		Thus in every case each of $P_f$ and $N_f$ contains at most one element, proving the lemma.
	\end{proof}
	
	 The next Theorem gives a lower bound of $ex_{\chi\leq 3}(n_t, n_t, n_t, K_{2,t})$.
	 
	 \begin{thm} \label{thm:lowerbound}
	 	Let $t\geq 2$ be an even integer, $p\geq 3$ is  a prime and $p \equiv 1 \pmod{t-1}$, $n_t = \frac{p(p-1)}{2(t-1)}$. Then we have
	 	$$
	 	ex(n_t,n_t,n_t,K_{2,t}) \geq \frac{3p(p-1)^2}{4(t-1)}.
	 	$$
	 \end{thm}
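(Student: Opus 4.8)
The plan is to exhibit an explicit tripartite graph $G$ on parts $V_1,V_2,V_3$, each identified with $\mathbb{F}_p\times B$ (so $|V_k|=p\cdot\frac a2=n_t$), and to check that it is $K_{2,t}$-free with exactly $\frac{3p(p-1)^2}{4(t-1)}$ edges. Write $H=\{g^{w}:w\in W\}$, the order-$(t-1)$ subgroup of $\mathbb{F}_p^*$ generated by $g^{a}$. I would orient the three parts cyclically and use one scale-type rule on the representatives $(x,b),(y,c),(z,d)$ (all second coordinates in $B$): declare $(x,b)_1\sim(y,c)_2$ iff $\frac{x-y}{bc}\in H$, $(y,c)_2\sim(z,d)_3$ iff $\frac{y-z}{cd}\in H$, and $(z,d)_3\sim(x,b)_1$ iff $\frac{z-x}{db}\in H$. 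The purpose of the cyclic orientation is that each part carries opposite signs on its two incident edge-types, which is precisely what makes Lemma~\ref{lemma:unique} bite later.

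First I would settle the edge count. Fixing $(x,b)\in V_1$, its neighbours in $V_2$ are exactly the vertices $(x-g^{w}bc,\,c)$ for $(w,c)\in W\times B$; since Lemma~\ref{lemma:unique} forces the map $(w,c)\mapsto g^{w}c$ to be injective on $W\times B$, these are pairwise distinct. Hence every vertex has exactly $|W|\,|B|=(t-1)\cdot\frac a2=\frac{p-1}{2}$ neighbours in each adjacent part, so $G$ is $(p-1)$-regular and $e(G)=\tfrac12\cdot 3n_t\cdot(p-1)=\frac{3p(p-1)^2}{4(t-1)}$, matching the claimed bound exactly.

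The substance is $K_{2,t}$-freeness, i.e.\ every pair of vertices has at most $t-1$ common neighbours. A pair in two different parts, say $u_1=(x_1,b_1)\in V_1$ and $u_2=(y_2,c_2)\in V_2$, has all its common neighbours in $V_3$; such a neighbour $(z,d)$ satisfies $\frac{z-x_1}{b_1d}\in H$ and $\frac{y_2-z}{c_2d}\in H$, and dividing these relations cancels the unknown label $d$, leaving $\frac{(z-x_1)c_2}{(y_2-z)b_1}\in H$. Each of the $t-1$ values $\eta\in H$ gives a linear equation determining $z$ uniquely, after which $d$ must solve $\frac{z-x_1}{b_1}=g^{w}d$ with $(w,d)\in W\times B$, which by Lemma~\ref{lemma:unique} has at most one solution; so there are at most $t-1$ common neighbours.

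The \textbf{main obstacle} is a pair in the same part, where the common neighbours split between the other two parts and a naive count permits up to $2(t-1)$. Here the cyclic orientation and the full force of Lemma~\ref{lemma:unique} are needed. For $u_1=(x_1,b_1),u_2=(x_2,b_2)\in V_1$, I would show that the candidate first coordinate $v_\eta$ produced from $\eta\in H$ is the same whether a common neighbour is sought in $V_2$ or in $V_3$; the $V_2$-condition then reads $f_\eta:=\frac{x_1-v_\eta}{b_1}\in\{g^{w}c:(w,c)\in W\times B\}$, i.e.\ $P_{f_\eta}\neq\varnothing$, while the opposite orientation on the $V_3$-edge makes the $V_3$-condition read $-f_\eta\in\{g^{w}d:(w,d)\in W\times B\}$, i.e.\ $N_{f_\eta}\neq\varnothing$. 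Lemma~\ref{lemma:unique} asserts that exactly one of $P_{f_\eta},N_{f_\eta}$ is nonempty and then a singleton, so each $\eta\in H$ contributes at most one common neighbour across $V_2\cup V_3$, for a total of at most $|H|=t-1$. This is exactly where the evenness of $t$ is used, via $-1\notin H$, which is what yields the clean $\pm$-dichotomy partitioning $\{g^wb\}$ and its negatives. I would finish with the routine non-degeneracy checks (injectivity of $\eta\mapsto v_\eta$, nonvanishing of denominators such as $\eta b_1-b_2$, and the observation that coincidences like $x_1=x_2,\,b_1=b_2$ force the two vertices to be equal), which are straightforward but necessary to make the counts exact.
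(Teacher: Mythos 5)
Your proposal is correct and follows essentially the same route as the paper: the construction is the identical algebraic graph (your condition $\frac{x-y}{bc}\in H$ is exactly the paper's $bc=g^w(x-y)$), the edge count agrees, and the $K_{2,t}$-freeness argument uses the same case split together with the same appeal to the $P_f$/$N_f$ dichotomy of Lemma~\ref{lemma:unique} to cap the combined $V_2$- and $V_3$-neighbours of a same-part pair at $t-1$. The only cosmetic difference is that you eliminate the common neighbour's label by division where the paper eliminates its field coordinate by subtraction.
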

\begin{proof}
	Let
	\[
	V = \{(b,x) \mid b\in B,\ x \in \mathbb{F}_p\}.
	\]
	The graph $G$ is tripartite, and its three vertex classes $V_1,V_2,V_3$ are copies of $V$. For $i=1,2,3$ (indices taken modulo $3$), a vertex $(b,x)\in V_i$ is adjacent to $(c,y)\in V_{i+1}$ if and only if there exists $w\in W$ such that
	\[
	bc = g^w(x-y)\quad\text{in }\mathbb{F}_p.
	\]
	For fixed $b,c,w,x$ the equation determines a unique $y$, hence $G$ has
	\[
	3\times \frac{a}{2} \times \frac{a}{2} \times (t-1) \times p = \frac{3p(p-1)^2}{4(t-1)}
	\]
	edges.
	
	We now show that $G$ is $K_{2,t}$-free. It suffices to prove that for any two distinct vertices $u=(b_1,x_1)$ and $v=(b_2,x_2)$ we have $|N(u)\cap N(v)|\le t-1$. We treat two cases.
	
	\begin{itemize}
		\item \textbf{Case 1.} $u,v$ belong to the same part. Without loss of generality assume $u,v\in V_1$. Let $s=(c,y)\not\in V_1$ be a common neighbour of $u$ and $v$. If $s\in V_2$, then there exist $w_1,w_2\in W$ with
		$
		b_i c = g^{w_i}(x_i-y),\quad i=1,2,
		$
		and hence
		$
		(g^{-w_1} b_1 - g^{-w_2}b_2)c = x_1-x_2,
		$
		i.e.
		\begin{eqnarray}\label{equa1}
			g^{-w_1} (b_1-g^{w_1-w_2}b_2)c = x_1-x_2.
		\end{eqnarray}
		
		If $s\in V_3$, then there exist $w_1,w_2\in W$ with
		$
		b_i c = g^{w_i}(y-x_i),\quad i=1,2,
		$
		and hence
		$
		(g^{-w_1} b_1 - g^{-w_2}b_2)c = x_2-x_1,
		$
		i.e.
		\begin{eqnarray}\label{equa2}
			g^{-w_1} (b_1-g^{w_1-w_2}b_2)c = -(x_1-x_2).
		\end{eqnarray}
		
		If $g^{w_1}a_1- g^{w_2}a_2=0$, then by Lemma~\ref{lemma:unique} we would have $w_1=w_2$ and $a_1=a_2$, which forces $x_1=x_2$, contradicting $u\neq v$. Thus $g^{w_1}a_1-g^{w_2}a_2\ne 0$. Fixing $g^{w_1-w_2}=g^z$, set
		$
		f = (b_1-g^{z}b_2)^{-1}(x_1-x_2).
		$
		Then \eqref{equa1} becomes $g^{-w_1}c = f$ and \eqref{equa2} becomes $g^{-w_1}c=-f$.
		By Lemma~\ref{lemma:unique}, among the two equations exactly one has a unique solution $(w_1,c)$ while the other has none, hence for each chosen value of $g^z$ there is at most one corresponding common neighbour $s$. Since $g^z$ can take at most $t-1$ distinct values, it follows that there are at most $t-1$ common neighbours in this case, i.e. $|N(u)\cap N(v)|\le t-1$.
		
		\item \textbf{Case 2.} $u,v$ lie in different parts. Without loss of generality assume $u\in V_1$ and $v\in V_2$. Any common neighbour $s=(b,y)$ must lie in $V_3$. Then there exist $w_1,w_2\in W$ with
		$
		b_1c = g^{w_1}(y-x_1),\qquad b_2c = g^{w_2}(x_2-y),
		$
		and hence
		$
		(g^{-w_1}b_1+g^{-w_2}b_2)c = x_2-x_1.
		$
		By Lemma~\ref{lemma:unique} the coefficient $g^{-w_1}b_1+g^{-w_2}b_2\ne 0$. Fix $g^{w_1-w_2}=g^z$ and set
		$
		f = (b_1+g^{z}b_2)^{-1}(x_2-x_1).
		$
		Again by Lemma~\ref{lemma:unique} the equation $g^{-w_1}c = f$ has at most one solution, so for each choice of $g^z$ there is at most one common neighbour $s$. As $g^z$ has at most $t-1$ values, we obtain $|N(u)\cap N(v)|\le t-1$ in this case as well.
	\end{itemize}
	
	Therefore $G$ is $K_{2,t}-$ free, hence
	$$
		 	ex(n_t,n_t,n_t,K_{2,t}) \geq \frac{3p(p-1)^2}{4(t-1)}.
	$$
\end{proof}
Since for every positive integer $m$ there exists a prime $p$ such that $(t-1) \mid (p-1)$ with $m\le p\le(1+o(1))m$, combining Theorem~\ref{thm:lowerbound} and Theorem~\ref{thm:upperbound} we obtain that for every positive even integer $t$,
\[
ex(n,n,n,K_{2,t}) = 3\sqrt{\frac{t-1}{2}}\, n^{3/2} + o(n^{3/2}).
\]
Together with the results of Tait and Timmons for odd $t$\cite{TT}, this yields the following theorem.

\begin{thm}\label{thm:main}
	For every integer $t\ge 2$,
	\[
	ex_{\chi\le 3}(n,K_{2,t}) = \sqrt{\frac{t-1}{6}}\, n^{3/2} + o(n^{3/2}).
	\]
\end{thm}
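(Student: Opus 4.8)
The plan is to prove Theorem~\ref{thm:main} by sandwiching $ex_{\chi\le 3}(n,K_{2,t})$ between matching upper and lower bounds and then checking that, after the change of parameters relating the chromatic formulation to the tripartite one, every bound collapses to the single constant $\sqrt{(t-1)/6}$ for all $t\ge 2$. The guiding observation is that a graph has chromatic number at most $3$ precisely when it is tripartite; hence $ex_{\chi\le 3}(n,K_{2,t})$ is exactly the maximum number of edges in a $K_{2,t}$-free tripartite graph on $n$ vertices, where the three parts need not have equal sizes but sum to $n$. I would treat the upper and lower bounds separately, and the parity of $t$ only enters the lower bound.

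For the upper bound, valid uniformly in $t\ge 2$, I would invoke the Tait--Timmons estimate quoted in the introduction specialised to $s=2$:
\[
ex_{\chi\le 3}(n,K_{2,t}) \le \Bigl(\tfrac13\Bigr)^{1/2}\Bigl(\tfrac{t-1}{2}+o(1)\Bigr)^{1/2} n^{3/2} = \sqrt{\tfrac{t-1}{6}}\, n^{3/2}+o(n^{3/2}).
\]
This already applies to arbitrary (possibly unbalanced) $3$-partitions, so no separate balancing argument is required, and it gives the exact target constant.

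For the lower bound I would split on the parity of $t$. Writing $m=\lfloor n/3\rfloor$, any $m\times m\times m$ tripartite $K_{2,t}$-free graph, padded with isolated vertices, is a $3$-chromatic graph on $n$ vertices, so $ex_{\chi\le 3}(n,K_{2,t})\ge ex(m,m,m,K_{2,t})$. For even $t$, the discussion preceding the theorem has already combined Theorem~\ref{thm:lowerbound} (with the density of primes $p\equiv 1\pmod{t-1}$) and Theorem~\ref{thm:upperbound} to give $ex(m,m,m,K_{2,t})=3\sqrt{(t-1)/2}\,m^{3/2}+o(m^{3/2})$; substituting $m\sim n/3$ and using $3\cdot 3^{-3/2}=3^{-1/2}$ converts the leading coefficient into $\sqrt{(t-1)/6}$. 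For odd $t=2r+1$, the matching lower bound is the Tait--Timmons construction itself, whose value $\sqrt{r/3}\,n^{3/2}$ coincides with $\sqrt{(t-1)/6}\,n^{3/2}$ because $r=(t-1)/2$.

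The remaining work is bookkeeping that contributes only $o(n^{3/2})$: interpolating from the sparse admissible part sizes $n_t=\tfrac{p(p-1)}{2(t-1)}$ to all $n$, and absorbing the error from replacing $n$ by $3\lfloor n/3\rfloor$, both of which are controlled by the density statement for primes $p\equiv 1\pmod{t-1}$ recorded just before the theorem. I expect the main obstacle here to be not any single estimate but the arithmetic of constant-matching: one must verify that the even-$t$ tripartite construction, the odd-$t$ Tait--Timmons construction, and the general Tait--Timmons upper bound all reduce to the same value $\sqrt{(t-1)/6}$. Once that coincidence is confirmed, the upper and lower bounds meet and the claimed asymptotic equality follows for every integer $t\ge 2$.
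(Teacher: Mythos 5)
Your proposal is correct and follows essentially the same route as the paper: the Tait--Timmons upper bound for the unbalanced $3$-chromatic setting, the balanced tripartite construction of Theorem~\ref{thm:lowerbound} (interpolated via primes $p\equiv 1\pmod{t-1}$) for even $t$, the Tait--Timmons construction for odd $t$, and the $3\cdot 3^{-3/2}=3^{-1/2}$ conversion of constants. If anything, you are slightly more careful than the paper in noting explicitly that the chromatic upper bound must cover unbalanced part sizes, which Theorem~\ref{thm:upperbound} alone does not.
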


\section{Conclusion}
In this work, by using algebraic constructions plus standard extremal tools, we determined the main asymptotic constants for $K_{2,t}$ in the three-colourable/tripartite settings. The main outcome is (Theorem~\ref{thm:main})
\[
ex_{\chi\le 3}(n,K_{2,t}) = \sqrt{\frac{t-1}{6}}\,n^{3/2} + o(n^{3/2})\qquad (t\ge 2).
\]

In future work I will continue along this line of research. For example, I plan to study the extremal function
$
ex_{\chi\le k}(n,F)
$
for larger values of \(k\), and to investigate other forbidden subgraphs \(F\) such as cycles \(C_\ell\). In particular, it would be interesting to compare even and odd cycles under tripartite constraints and to seek matching constructions and upper bounds.

\section*{ACKNOWLEDGMENTS}
I would like to express my gratitude to my family for their unwavering support throughout this study.

\end{document}